\documentclass[12pt,oneside,english]{amsart}
\usepackage[T1]{fontenc}
\usepackage[latin9]{inputenc}
\usepackage{geometry}
\geometry{verbose,tmargin=3cm,bmargin=3cm,lmargin=3cm,rmargin=3cm}
\setcounter{tocdepth}{1}
\synctex=-1
\usepackage{amstext}
\usepackage{amsthm}
\usepackage{amssymb}

\makeatletter


\subjclass[2010]{14D06, 14E05}

\numberwithin{equation}{section}
\numberwithin{figure}{section}
\theoremstyle{plain}
\newtheorem{thm}{\protect\theoremname}[section]
  \theoremstyle{remark}
  \newtheorem{rem}[thm]{\protect\remarkname}
  \theoremstyle{plain}
  \newtheorem{assumption}[thm]{\protect\assumptionname}
  \theoremstyle{definition}
  \newtheorem{defn}[thm]{\protect\definitionname}
  \theoremstyle{plain}
  \newtheorem{prop}[thm]{\protect\propositionname}
  \theoremstyle{plain}
  \newtheorem{lem}[thm]{\protect\lemmaname}
  \theoremstyle{remark}
  \newtheorem{notation}[thm]{\protect\notationname}

\makeatother

\usepackage{babel}
  \providecommand{\assumptionname}{Assumption}
  \providecommand{\definitionname}{Definition}
  \providecommand{\lemmaname}{Lemma}
  \providecommand{\notationname}{Notation}
  \providecommand{\propositionname}{Proposition}
  \providecommand{\remarkname}{Remark}
\providecommand{\theoremname}{Theorem}

\begin{document}

\title{Motivic Serre invariants modulo the square of $\mathbb{L}-1$}

\author{Takehiko Yasuda}
\begin{abstract}
Motivic Serre invariants defined by Loeser and Sebag are elements
of the Grothendieck ring of varieties modulo $\mathbb{L}-1$. In this
paper, we show that we can lift these invariants to modulo the square
of $\mathbb{L}-1$ after tensoring the Grothendieck ring with $\mathbb{Q}$,
under certain assumptions.
\end{abstract}

\address{Department of Mathematics, Graduate School of Science, Osaka University,
Toyonaka, Osaka 560-0043, Japan}

\email{takehikoyasuda@math.sci.osaka-u.ac.jp}

\thanks{The most part of this work was done during the author's stay at Institut
des Hautes Études Scientifiques. He is grateful for its hospitality
and great environment. He also wishes to thank François Loeser for
inspiring discussion and helpful comments. This work was partly supported
by JSPS KAKENHI Grant Number JP15K17510 and JP16H06337.}

\maketitle
\global\long\def\AA{\mathbb{A}}
\global\long\def\PP{\mathbb{P}}
\global\long\def\NN{\mathbb{N}}
\global\long\def\GG{\mathbb{G}}
\global\long\def\ZZ{\mathbb{Z}}
\global\long\def\QQ{\mathbb{Q}}
\global\long\def\CC{\mathbb{C}}
\global\long\def\FF{\mathbb{F}}
\global\long\def\LL{\mathbb{L}}
\global\long\def\RR{\mathbb{R}}

\global\long\def\bx{\mathbf{x}}
\global\long\def\bf{\mathbf{f}}

\global\long\def\cN{\mathcal{N}}
\global\long\def\cW{\mathcal{W}}
\global\long\def\cY{\mathcal{Y}}
\global\long\def\cM{\mathcal{M}}
\global\long\def\cF{\mathcal{F}}
\global\long\def\cX{\mathcal{X}}
\global\long\def\cE{\mathcal{E}}
\global\long\def\cJ{\mathcal{J}}
\global\long\def\cO{\mathcal{O}}
\global\long\def\cD{\mathcal{D}}

\global\long\def\fs{\mathfrak{s}}
\global\long\def\fp{\mathfrak{p}}
\global\long\def\fm{\mathfrak{m}}

\global\long\def\Spec{\mathrm{Spec}\,}
\global\long\def\ord{\mathrm{ord}\,}

\global\long\def\Var{\mathrm{Var}}
\global\long\def\Gal{\mathrm{Gal}}
\global\long\def\bRep{\mathrm{Rep}}
\global\long\def\Jac{\mathrm{Jac}}
\global\long\def\Gor{\mathrm{Gor}}
\global\long\def\Ker{\mathrm{Ker}}
\global\long\def\Im{\mathrm{Im}}
\global\long\def\Aut{\mathrm{Aut}}
\global\long\def\sht{\mathrm{sht}}
\global\long\def\cZ{\mathcal{Z}}
\global\long\def\st{\mathrm{st}}
\global\long\def\diag{\mathrm{diag}}
\global\long\def\characteristic#1{\mathrm{char}(#1)}
\global\long\def\tors{\mathrm{tors}}
\global\long\def\sing{\mathrm{sing}}
\global\long\def\red{\mathrm{red}}
\global\long\def\pt{\mathrm{pt}}
 \global\long\def\univ{\mathrm{univ}}
\global\long\def\length{\mathrm{length}\,}
\global\long\def\sm{\mathrm{sm}}
\global\long\def\top{\mathrm{top}}
\global\long\def\rank{\mathrm{rank\,}}
\global\long\def\age{\mathrm{age}\,}

\section{Introduction}

Let $K$ be a complete discrete valuation field with a perfect residue
field $k$. For a smooth projective irreducible $K$-variety $X$,
Loeser and Sebag \cite{MR1997948} defined the motivic Serre invariant
$S(X)$. This invariant belongs to the ring $K_{0}(\Var_{k})/(\LL-1)$,
where $K_{0}(\Var_{k})$ is the Grothendieck ring of $k$-varieties
and $\LL:=[\AA_{k}^{1}]$, the class of an affine line in this ring.
Let $K_{0}(\Var_{k})_{\QQ}:=K_{0}(\Var_{k})\otimes_{\ZZ}\QQ$. In
this paper, we construct, under a certain assumption, an invariant
\[
\tilde{S}(X)\in K_{0}(\Var_{k})_{\QQ}/(\LL-1)^{2}
\]
which coincides with $S(X)$ in $K_{0}(\Var_{k})_{\QQ}/(\LL-1)$ . 
\begin{rem}
Loeser and Sebag defined the motivic Serre invariant more generally
for smooth quasi-compact separated rigid $K$-spaces. For the sake
of simplicity, we consider only the case where $X$ is a projective
variety. 
\end{rem}
Let $\cO$ be the valuation ring of $K$. The assumption we will make
is that the desingularization theorem and the weak factorization theorem
hold, their precise statements are as follows: 
\begin{assumption}
\label{assu:assumption-desing-weak-fac}
\begin{enumerate}
\item (Desingularization) There exists a regular projective flat $\cO$-scheme
$\cX$ with the generic fiber $\cX_{K}:=\cX\otimes_{\cO}K=X$ such
that the special fiber $\cX_{k}:=\cX\otimes_{\cO}k$ is a simple normal
crossing divisor in $\cX$. (We call such an $\cX$ a \emph{regular
snc model }of $X$.)
\item (Weak factorization) Let $\cX$ and $\cX'$ be regular snc models
of $X$. Then there exist finitely many regular snc models of $X$,
\[
\cX_{0}=\cX,\,\cX_{1},\dots,\cX_{n}=\cX',
\]
such that for every $i$, either the birational map $\cX_{i}\dasharrow\cX_{i+1}$
is the blowup along a regular center $Z\subset\cX_{i+1,k}$ which
has normal crossings\footnote{That $Z$ \emph{has normal crossings with} $\cX_{i+1,k}$ means that
for every closed point $x\in\cX_{i+1,k}$, there exist a regular system
of parameters $x_{1},\dots,x_{d}\in\cO_{\cX_{i+1},x}$ such that in
an open neighborhood of $x$, the support of the special fiber $\cX_{i+1,k}$
is the zero locus of $\prod_{v\in V}x_{v}$ for some subset $V\subset\{1,\dots,d\}$
and $Z$ is the common zero locus of $x_{w}$, $w\in W$ for some
$W\subset\{1,\dots,d\}$. } with $\cX_{i+1,k}$ or its inverse $\cX_{i+1}\dasharrow\cX_{i}$
has the same description with $\cX_{i+1,k}$ replaced with $\cX_{i,k}$. 
\end{enumerate}
\end{assumption}
When $X$ has dimension one, this assumption holds as is well-known.
Indeed the above desingularization theorem in this case follows from
the desingularization theorem for excellent surfaces by Abhyankar,
Hironaka and Lipman (see \cite{MR0491722}), while the weak factorization
follows from the fact that every proper birational morphism of regular
integral noetherian schemes of dimension two factors into a sequence
of finitely many blowups at closed points. The last fact is well-known
in the case of varieties over an algebraically closed field (for instance,
\cite[V, Cor. 5.4]{MR0463157}) and is valid even in our situation
as proved in \cite[Th. 4.1]{MR0276239} in a more general context.
Assumption \ref{assu:assumption-desing-weak-fac} holds also when
$k$ has characteristic zero. This follows from the recent generalizations
to excellent schemes respectively by Temkin \cite{MR2435647,MR2957701}
and by Abramovich and Temkin \cite{Abramovich:2016rr} of the Hironaka
desingularization theorem and the weak factorization theorem of Abramovich,
Karu, Matsuki and W\l odarczyk \cite{MR1896232}.

Let $\cX$ be a regular snc model of $X$, let $\cX_{\sm}$ be its
$\cO$-smooth locus and let $\cX_{\sm,k}:=\cX_{\sm}\otimes_{\cO}k$.
Then $\cX_{\sm}$ is a weak Neron model of $X$ in the sense of \cite{MR1045822}
and by definition, 
\[
S(X)=[\cX_{\sm,k}]\in K_{0}(\Var_{k})/(\LL-1).
\]
To define our invariant $\tilde{S}(X)$, we also need information
on the non-smooth locus of $\cX$. Regard $\cX_{k}$ as a divisor
and write it as $\cX_{k}=\sum_{i\in I}a_{i}D_{i}$, where $D_{i}$
are the irreducible components of $\cX_{k}$ and $a_{i}$ are the
multiplicities of $D_{i}$ in $\cX$ respectively. For a subset $H\subset I$,
we define 
\[
D_{H}^{\circ}:=\bigcap_{h\in H}D_{h}\setminus\bigcup_{i\in I\setminus H}D_{i}.
\]
When $H=\{i\}$, we abbreviate it to $D_{i}^{\circ}$, and when $H=\{i,j\}$,
to $D_{ij}^{\circ}$. These locally closed subsets give the stratification
\[
\cX_{k}=\bigsqcup_{\emptyset\ne H\subset I}D_{H}^{\circ}
\]
and the stratification 
\[
\cX_{\sm,k}=\bigcup_{i\in I:\,a_{i}=1}D_{i}^{\circ}.
\]
From the second stratification, we see 
\[
S(X)=\sum_{i\in I:\,a_{i}=1}[D_{i}^{\circ}]\in K_{0}(\Var_{k})/(\LL-1).
\]
Loeser and Sebag proved in the paper cited above that this is independent
of the model $\cX$ and depends only on $X$. 
\begin{defn}
For a regular snc model $\cX$ of $X$, we define
\[
\tilde{S}(\cX):=\sum_{i\in I:\,a_{i}=1}[D_{i}^{\circ}]+\sum_{\substack{\{i,j\}\subset I:\\
(a_{i},a_{j})=1
}
}\frac{1}{a_{i}a_{j}}[D_{ij}^{\circ}](1-\LL)
\]
as an element of $K_{0}(\Var_{k})_{\QQ}/(\LL-1)^{2}$. Here $(a,b)$
denotes the greatest common divisor of $a$ and $b$. 
\end{defn}
Obviously, the two invariants $S(X)$ and $\tilde{S}(\cX)$ coincide
when they are sent to $K_{0}(\Var_{k})_{\QQ}/(\LL-1)$ by the natural
maps. 

The following is our main theorem:
\begin{thm}
\label{thm:main}Let $X$ be a smooth projective $K$-variety. Under
Assumption \ref{assu:assumption-desing-weak-fac}, the invariant $\tilde{S}(\cX)$
is independent of the chosen regular snc model $\cX$ and depends
only on $X$. 
\end{thm}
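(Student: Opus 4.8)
The plan is to use the weak factorization theorem from Assumption \ref{assu:assumption-desing-weak-fac}(2) to reduce the statement to a single elementary situation. Since any two regular snc models of $X$ are connected by a chain of blowups and blowdowns along regular centers $Z$ having normal crossings with the special fiber, it suffices to prove that $\tilde{S}(\cX)$ is unchanged under one such blowup. So I would fix a regular snc model $\cX$ of $X$, a regular center $Z \subset \cX_k$ having normal crossings with $\cX_k$, let $\pi\colon \cX' \to \cX$ be the blowup along $Z$, and aim to show $\tilde{S}(\cX') = \tilde{S}(\cX)$ in $K_0(\Var_k)_{\QQ}/(\LL-1)^2$.

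The key computation is a local one. Because $Z$ has normal crossings with $\cX_k$, near each point of $Z$ we can choose coordinates so that $\cX_k$ is cut out by $\prod_{v\in V} x_v^{a_v}$ (with the various $a_v$ being the multiplicities $a_i$) and $Z$ is cut out by $\{x_w = 0 : w \in W\}$; write $c := |W|$ for the codimension of $Z$. The exceptional divisor $E$ of $\pi$ has multiplicity $a_E = \sum_{w \in W} a_w$ in $\cX'_k$ (only those $w$ with $x_w$ actually dividing the equation of $\cX_k$ contribute, but one keeps track of all of them). One then needs the class identities in the Grothendieck ring: the strata $D_H^\circ$ away from $Z$ are untouched, while the strata meeting $Z$ get replaced by bundles over (pieces of) $Z$ with fibers related to $\PP^{c-1}$ and its torus strata. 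Concretely, a point of the projectivized normal bundle lying over a stratum of $Z$ corresponds to a point of $\PP^{c-1}$, and the relevant subloci of $E$ fibered over strata of $Z$ contribute classes that are polynomials in $\LL$; when reduced modulo $(\LL-1)^2$ these become linear in $(\LL-1)$, and the arithmetic-function identities among the coefficients $1/a_i$, $1/(a_ia_j)$ conspire — together with the gcd conditions $(a_i,a_j)=1$ — to make the two sums defining $\tilde{S}(\cX)$ and $\tilde{S}(\cX')$ agree. The factor $(1-\LL)$ in the second sum is precisely what is needed so that the change in the codimension-zero part (the $S(X)$ part, which by Loeser--Sebag is already known to be invariant modulo $\LL-1$) is cancelled modulo $(\LL-1)^2$ by the change in the codimension-one part.

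I expect the main obstacle to be the bookkeeping in the blowup computation: one must carefully enumerate which components $D_i$ of $\cX_k$ contain $Z$ or meet it, how their multiplicities $a_i$ and the new multiplicity $a_E$ interact with the gcd conditions defining the index sets in the definition of $\tilde{S}$, and how the strata $D_i^\circ$ and $D_{ij}^\circ$ decompose after pulling back. In particular one has to check the case analysis on whether $Z$ is contained in a single $D_i$, in an intersection $D_i \cap D_j$, or in higher codimension, and in each case verify that the exceptional contributions cancel. A clean way to organize this is to compute $[\cX'_{\sm,k}] - [\cX_{\sm,k}]$ and the correction term separately as explicit expressions in $\LL$ and $[Z']$-type classes (for the various strata $Z'$ of $Z$), and then check the total vanishes in $K_0(\Var_k)_{\QQ}/(\LL-1)^2$; here working modulo $(\LL-1)^2$ rather than with exact identities is what makes the numerology tractable, since $\LL^m \equiv 1 + m(\LL-1) \pmod{(\LL-1)^2}$ linearizes everything. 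Finally I would note that the two blowup directions in the weak factorization statement are symmetric, so proving invariance under one blowup suffices, completing the proof.
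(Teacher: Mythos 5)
Your overall strategy --- reduce via weak factorization to a single blowup along a regular center $Z$ having normal crossings with the special fiber, then verify invariance of $\tilde{S}$ under that blowup by a local computation, using $\LL^{m}\equiv1+m(\LL-1)\bmod(\LL-1)^{2}$ to linearize --- is exactly the route the paper takes. But as written the proposal stops where the actual proof begins: you assert that ``the arithmetic-function identities among the coefficients $1/a_{i}$, $1/(a_{i}a_{j})$ conspire'' to produce the cancellation, without ever exhibiting those identities, and they are the entire content of the theorem. The decisive case is when $Z$ lies in a codimension-two stratum $D_{1}\cap D_{2}$: the exceptional divisor acquires multiplicity $a_{0}=a_{1}+a_{2}$, and one must check both that $(a_{1},a_{2})=1$ if and only if $(a_{0},a_{1})=(a_{0},a_{2})=1$ (so that the index sets over which the two sums run match up) and that
\[
\frac{1}{a_{0}a_{1}}+\frac{1}{a_{0}a_{2}}=\frac{a_{1}+a_{2}}{a_{0}a_{1}a_{2}}=\frac{1}{a_{1}a_{2}},
\]
which is precisely what forces the coefficient $\frac{1}{a_{i}a_{j}}$ in the definition of $\tilde{S}$. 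Nothing in your write-up certifies that this numerology works, and without it one cannot claim the two sums agree.

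Two further points need attention. First, the multiplicity of the exceptional divisor is $a_{0}=\sum_{i:\,Z\subset D_{i}}a_{i}$, the sum over only those components of $\cX_{k}$ that contain $Z$; your formula $a_{E}=\sum_{w\in W}a_{w}$ as stated sums over all coordinates cutting out $Z$, and the parenthetical hedge does not resolve the ambiguity. Second, the case where $Z$ lies in a stratum $D_{H}^{\circ}$ with $\sharp H\ge3$ must be treated separately: there both models receive zero contribution from such strata (any would-be correction term would carry a factor $(1-\LL)^{\sharp H-1}\equiv0$), but one still has to verify that the blowup does not create new \emph{nonzero} contributions to the codimension $\le1$ part; the paper does this by showing that the relevant intersections with the exceptional divisor are either empty or are $\GG_{m}$-bundles whose classes contribute multiples of $(\LL-1)^{2}$. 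Finally, to make ``the strata away from $Z$ are untouched'' rigorous you need a device for localizing --- the paper introduces a relative invariant $\tilde{S}(\cX,C)$ for constructible $C\subset\cX_{k}$, additive in $C$, and reduces to $C\subset Z$ --- or some equivalent bookkeeping that lets you compare the two models stratum by stratum.
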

The theorem allows us to think of $\tilde{S}(\cX)$ as an invariant
of $X$ and denote it by $\tilde{S}(X)$, which is what was mentioned
at the beginning of this Introduction.

\section{Preparatory reductions}

We generalize the invariant $\tilde{S}(\cX)$ as follows. Let $\cX$
be a regular flat $\cO$-scheme of finite type such that $\cX_{K}$
is smooth and $\cX_{k}=\bigcup_{i\in I}D_{i}$ is a simple normal
crossing divisor in $\cX$. (We no longer suppose that $\cX$ or $\cX_{K}$
is projective.) For a constructible subset $C\subset\cX_{k}$, we
define 
\[
\tilde{S}(\cX,C):=\sum_{\substack{i\in I:\\
a_{i}=1
}
}[D_{i}^{\circ}\cap C]+\sum_{\substack{\{i,j\}\subset I:\\
(a_{i},a_{j})=1
}
}\frac{1}{a_{i}a_{j}}[D_{ij}^{\circ}\cap C](1-\LL)
\]
as an element of $K_{0}(\Var_{k})_{\QQ}/(\LL-1)^{2}$. 

Let $f\colon\cY\to\cX$ be the blowup along a smooth irreducible center
$Z\subset\cX_{k}$ which has normal crossings with $\cX_{k}$. Then,
$\cY$ is an $\cO$-scheme satisfying the same conditions as $\cX$
does and we can similarly define $\tilde{S}(\cY,C')$ for a constructible
subset $C'\subset\cY_{k}$. 

Theorem \ref{thm:main} follows from:
\begin{prop}
\label{prop:main}Let $\cX$ be as above. For any constructible subset
$C\subset\cX_{k}$, we have
\[
\tilde{S}(\cX,C)=\tilde{S}(\cY,f^{-1}(C)).
\]
\end{prop}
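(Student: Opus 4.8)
The plan is to reduce to the case where the constructible set $C$ lies in a single stratum of $\cX_{k}$ and then to compute the two sides of the asserted equality directly. Since both $C\mapsto\tilde S(\cX,C)$ and $C'\mapsto\tilde S(\cY,C')$ are additive for finite disjoint unions of constructible sets, and $f^{-1}$ commutes with disjoint unions, it is enough to check the identity with $C$ running over the members of a convenient constructible stratification of $\cX_{k}$; also we may assume $Z$ has codimension $c\ge 2$ in $\cX$, since otherwise $f$ is an isomorphism. Write $E\subset\cY$ for the exceptional divisor and $\tilde D_{i}\subset\cY$ for the strict transform of $D_{i}$. Decomposing $C=(C\setminus Z)\sqcup(C\cap Z)$ and using that $f$ restricts to an isomorphism $\cY\setminus E\xrightarrow{\sim}\cX\setminus Z$ carrying $\tilde D_{i}\setminus E$ onto $D_{i}\setminus Z$ and introducing no new components, one gets $\tilde S(\cX,C\setminus Z)=\tilde S(\cY,f^{-1}(C\setminus Z))$, so we may assume $C\subseteq Z$. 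Let $J:=\{i\in I:Z\subseteq D_{i}\}$ (well defined because $Z$ is irreducible), $r:=\#J\ge 1$, $s:=c-r\ge 0$. The sets $D_{H}^{\circ}\cap Z$ stratify $Z$ and are empty unless $H\supseteq J$, so we reduce to $C\subseteq D_{H}^{\circ}\cap Z$ for a fixed $H\supseteq J$; put $h:=\#H$.

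Next I would record the blowup combinatorics. As Cartier divisors $\cY_{k}=f^{*}\cX_{k}$, so $E$ occurs in $\cY_{k}$ with multiplicity $a_{E}=\sum_{j\in J}a_{j}$, while each $\tilde D_{i}$ keeps its multiplicity $a_{i}$. Working Zariski-locally in a coordinate system adapted to $Z$ and $\cX_{k}$, which exists by the normal-crossings hypothesis, $E\to Z$ is a $\PP^{c-1}$-bundle, $\tilde D_{j}\cap E$ for $j\in J$ is the corresponding coordinate-hyperplane subbundle, $\tilde D_{i}$ for $i\in H\setminus J$ contains all of $f^{-1}(D_{H}^{\circ}\cap Z)$, and $\tilde D_{i}$ for $i\notin H$ is disjoint from it. Consequently, for $y\in f^{-1}(C)\subseteq E$ the components of $\cY_{k}$ through $y$ are $E$, the $\tilde D_{i}$ with $i\in H\setminus J$, and those $\tilde D_{j}$ ($j\in J$) containing $y$. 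Stratifying $f^{-1}(C)=E|_{C}$ by $(E|_{C})_{B}:=\{y:y\in\tilde D_{j}\Leftrightarrow j\in B\}$ for $B\subseteq J$, the local product structure and a projective-bundle inclusion--exclusion give
\[
[(E|_{C})_{B}]=[C]\,\LL^{s}(\LL-1)^{r-\#B-1}\ \ (\#B<r),\qquad [(E|_{C})_{J}]=[C]\,[\PP^{s-1}],
\]
and in particular $(\LL-1)\,[(E|_{C})_{J}]=[C]\,(\LL^{s}-1)$.

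Then I would conclude by cases. Only strata with at most two components contribute to $\tilde S$, and every such stratum of $\cY_{k}$ meeting $E|_{C}$ contains $E$; the relevant ones are the singleton $\{E\}$ (occurring iff $h=r$), the pair $\{E,\tilde D_{i}\}$ with $i\in H\setminus J$ (occurring iff $h=r+1$), and the pairs $\{E,\tilde D_{j}\}$ with $j\in J$ (occurring iff $h=r$). On the other side, $\tilde S(\cX,C)$ is $[C]$ when $h=\#J=1$ and $a_{j_{0}}=1$, is $\frac{1}{a_{i}a_{j}}[C](1-\LL)$ when $h=2$ and $(a_{i},a_{j})=1$, and is $0$ when $h\ge 3$. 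Inserting the classes above, with $a_{E}$ in the slots dictated by the definition of $\tilde S$, and reducing modulo $(\LL-1)^{2}$ (so $\LL^{s}(\LL-1)\equiv\LL-1$ and $(\LL-1)^{m}\equiv 0$ for $m\ge 2$), all cases with $h\ge\#J+2$, with $h=\#J\ge 3$, or with $h=\#J+1\ge 3$ vanish on both sides; the case $h=\#J+1=2$ matches at once after using $a_{E}=a_{j_{0}}$; and the two genuinely non-trivial cases reduce to elementary facts, namely $h=\#J=1$ to the exact identity $\LL^{s}-(\LL^{s}-1)=1$ provided by $(\LL-1)[\PP^{s-1}]=\LL^{s}-1$, and $h=\#J=2$, with $p,q$ the two multiplicities and $a_{E}=p+q$, to the observations $(p+q,p)=(p+q,q)=(p,q)$ and $\frac{1}{(p+q)p}+\frac{1}{(p+q)q}=\frac{1}{pq}$.

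The main obstacle is the bookkeeping rather than any hard geometry or algebra: one must make sure the list of strata of $\cY_{k}$ meeting $E|_{C}$ is exhaustive and non-redundant and that the exceptional multiplicity $a_{E}=\sum_{j\in J}a_{j}$ enters in exactly the right places; once this is arranged, the two displayed elementary identities force all the terms to cancel, which is precisely the reason the coefficients $\frac{1}{a_{i}a_{j}}$ appear in the definition of $\tilde S$.
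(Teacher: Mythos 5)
Your proposal is correct and follows essentially the same route as the paper: reduce by additivity and localization to $C$ contained in a single stratum of $Z$, describe the exceptional divisor over $C$ as a projective bundle whose coordinate-hyperplane subbundles are the strict transforms $\tilde D_j$, $j\in J$ (while the $\tilde D_i$ with $i\in H\setminus J$ contain all of $f^{-1}(C)$, which is the paper's Lemma \ref{lem:I - J}), and verify the resulting cases using the identities $\LL^{s}(1-\LL)\equiv 1-\LL \bmod (\LL-1)^{2}$ and $\frac{1}{(p+q)p}+\frac{1}{(p+q)q}=\frac{1}{pq}$. The only difference is organizational: you index the case analysis uniformly by $(h,\#J)$ with a single closed-form stratum-class formula, where the paper treats $d'=1$, $d'=2$, $d'\ge 3$ in separate sections.
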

Indeed, Theorem \ref{thm:main} is a direct consequence of this proposition
with $C=\cX_{k}$ and Assumption \ref{assu:assumption-desing-weak-fac}. 

In what follows, we will prove this proposition. First we will reduce
it to the local situation by using:
\begin{lem}
\label{lem:reduce-local}
\begin{enumerate}
\item If $C$ is the disjoint union $\bigsqcup_{s=1}^{l}C_{s}$ of constructible
subsets $C_{s}$, then 
\[
\tilde{S}(\cX,C)=\sum_{s=1}^{l}\tilde{S}(\cX,C_{s}).
\]
\item Let $\cX=\bigcup_{\lambda\in\Lambda}U_{\lambda}$ be an open covering.
Suppose that for every constructible subset $C\subset\cX_{k}$ and
for every $\lambda\in\Lambda$, 
\[
\tilde{S}(\cX,C\cap U_{\lambda})=\tilde{S}(\cY,f^{-1}(C\cap U_{\lambda})).
\]
Then, for every constructible subset $C\subset\cX_{k}$, we have 
\[
\tilde{S}(\cX,C)=\tilde{S}(\cY,f^{-1}(C)).
\]
.
\end{enumerate}
\end{lem}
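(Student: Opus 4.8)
The plan is to verify both statements directly from the defining formula for $\tilde{S}(\cX,C)$, which is manifestly additive in $C$ in the appropriate sense. For part (1), observe that each summand of $\tilde{S}(\cX,C)$ is of the form $[D_{i}^{\circ}\cap C]$ or $\frac{1}{a_{i}a_{j}}[D_{ij}^{\circ}\cap C](1-\LL)$, and for a locally closed subset $A\subset\cX_{k}$ (such as $D_{i}^{\circ}$ or $D_{ij}^{\circ}$), if $C=\bigsqcup_{s=1}^{l}C_{s}$ then $A\cap C=\bigsqcup_{s=1}^{l}(A\cap C_{s})$ is a disjoint union of constructible sets, so $[A\cap C]=\sum_{s=1}^{l}[A\cap C_{s}]$ in $K_{0}(\Var_{k})$ (and hence in $K_{0}(\Var_{k})_{\QQ}/(\LL-1)^{2}$) by the scissor relation. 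Summing over the finitely many pairs and singletons gives $\tilde{S}(\cX,C)=\sum_{s=1}^{l}\tilde{S}(\cX,C_{s})$.

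For part (2), I would reduce to part (1) by a standard inclusion--exclusion / refinement argument. Since $\cX$ is quasi-compact (it is of finite type over $\cO$), the open cover $\{U_{\lambda}\}$ has a finite subcover $U_{1},\dots,U_{m}$. Given a constructible $C\subset\cX_{k}$, partition $C$ into the finitely many constructible pieces
\[
C_{T}:=\Bigl(C\cap\bigcap_{\lambda\in T}U_{\lambda}\Bigr)\setminus\bigcup_{\lambda\notin T}U_{\lambda},\qquad \emptyset\ne T\subset\{1,\dots,m\},
\]
so that $C=\bigsqcup_{T}C_{T}$ and each $C_{T}$ is contained in $U_{\lambda}$ for any $\lambda\in T$; in particular $C_{T}\cap U_{\lambda}=C_{T}$ for such $\lambda$. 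By part (1) applied on both $\cX$ and $\cY$ (noting $f^{-1}(C)=\bigsqcup_{T}f^{-1}(C_{T})$, a disjoint union since $f$ is a morphism), it suffices to prove $\tilde{S}(\cX,C_{T})=\tilde{S}(\cY,f^{-1}(C_{T}))$ for each $T$. Picking any $\lambda\in T$, we have $C_{T}=C_{T}\cap U_{\lambda}$, and the hypothesis gives exactly $\tilde{S}(\cX,C_{T}\cap U_{\lambda})=\tilde{S}(\cY,f^{-1}(C_{T}\cap U_{\lambda}))$, which is what we want.

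The only point requiring a little care — and the closest thing to an obstacle — is the implicit compatibility: one must check that $\tilde{S}(\cX,C\cap U_{\lambda})$ computed intrinsically on $\cX$ agrees with the analogous quantity computed on the open subscheme $U_{\lambda}$, and likewise on $\cY$. This is immediate because the strata of $U_{\lambda,k}$ are $D_{i}^{\circ}\cap U_{\lambda}$ and $D_{ij}^{\circ}\cap U_{\lambda}$, with the same multiplicities $a_{i}$, so the defining sum for $U_{\lambda}$ is just the defining sum for $\cX$ with every stratum intersected with $U_{\lambda}$; and for $\cY$ one uses that $f$ restricts to the blowup of $U_{\lambda}$ along $Z\cap U_{\lambda}$, so $f^{-1}(C\cap U_{\lambda})=f^{-1}(C)\cap f^{-1}(U_{\lambda})$ is the corresponding open piece of $\cY$. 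Everything else is the elementary bookkeeping of the scissor relations in $K_{0}(\Var_{k})_{\QQ}/(\LL-1)^{2}$ recorded above.
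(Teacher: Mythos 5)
Your proof is correct and follows essentially the same strategy as the paper: part (1) is immediate from the scissor relations applied termwise to the defining formula, and part (2) reduces to part (1) by partitioning $C$ into finitely many constructible pieces each contained in a single $U_{\lambda}$ and then invoking the hypothesis on each piece. The only (inessential) difference is how that partition is built — you use quasi-compactness to extract a finite subcover and take the Boolean ``atoms'' $C_{T}$, while the paper constructs the pieces recursively by a dimension argument; your version is, if anything, a bit more explicit.
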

\begin{proof}
The first assertion is obvious. To show the second one, we first claim
that there exists a stratification $C=\bigsqcup_{s=0}^{n}C_{s}$ with
$C_{s}$ constructible such that each $C_{s}$ is contained in some
$U_{\lambda}$. Indeed we can take $C_{0}$ as $C\cap U_{\lambda}$
such that $C$ and $C_{0}$ have equal dimension, then construct $C_{1}$
applying the same procedure to $C\setminus U_{\lambda}$ and so on. 

By the assumption, for every $s$, $\tilde{S}(\cX,C_{s})=\tilde{S}(\cY,f^{-1}(C_{s}))$.
Now, from the first assertion, we get
\[
\tilde{S}(\cX,C)=\sum_{s}\tilde{S}(\cX,C_{s})=\sum_{s}\tilde{S}(\cY,f^{-1}(C_{s}))=\tilde{S}(\cY,f^{-1}(C)).
\]
\end{proof}
Let $x\in\cX_{k}$ be a closed point and take a local coordinate system
$x_{1},\dots,x_{d}\in\cO_{\cX,x}$. By shrinking $\cX$ if necessary,
we may suppose that $x_{1},\dots,x_{d}$ are global sections of $\cO_{\cX}$
and that the special fiber $\cX_{k}$ is the zero locus of $\prod_{i=1}^{d'}x_{i}$,
$d'\le d$ (thus we identify $I$ with $\{1,\dots,d'\}$) and $Z$
is the common zero locus of $x_{j}$, $j\in J$ for some subset $J\subset\{1,\dots,d\}$.
From the first assertion of the above lemma, since we obviously have
\[
\tilde{S}(\cX,C\setminus Z)=\tilde{S}(\cY,f^{-1}(C\setminus Z)),
\]
we may also assume that 
\begin{equation}
C\subset Z.\label{eq:C sub Z}
\end{equation}
In a few following sections, we will prove Proposition \ref{prop:main}
in this situation, discussing separately in the cases $(\sharp I=)d'=1$,
$d'=2$ and $d'\ge3$. Before that, we prepare some notation and a
lemma. 
\begin{notation}
\label{nota:in-prep-reduction}For $i\in I$, let $D_{i}$ be the
prime divisor of $\cX$ given by $x_{i}=0$ and let $E_{i}\subset\cY_{k}$
be its strict transform. Let $E_{0}\subset\cY_{k}$ be the exceptional
divisor of the blowup $f\colon\cY\to\cX$. We denote $f^{-1}(C)$
by $\tilde{C}$. 
\end{notation}
The multiplicity of $E_{i}$ in $\cY_{k}$ is $a_{i}$ for $i\in I$
and 
\begin{equation}
a_{0}:=\sum_{Z\subset D_{i}}a_{i}\label{eq:a0}
\end{equation}
for $i=0$. We will use the following lemma several times.
\begin{lem}
\label{lem:I - J}For $i\in I\setminus J$, if $C\subset Z\cap D_{i}$,
then we have $\tilde{C}\subset E_{i}$. 
\end{lem}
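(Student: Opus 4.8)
The plan is to work locally, using the explicit coordinate description of the blowup $f\colon\cY\to\cX$ arranged just before the statement. Recall that $\cX_k$ is the zero locus of $\prod_{i=1}^{d'}x_i$ and the center $Z$ is cut out by $x_j=0$ for $j\in J$. Since $D_i$ is the divisor $x_i=0$, the hypothesis $i\in I\setminus J$ means precisely that $x_i$ is \emph{not} one of the equations defining $Z$; equivalently $Z\not\subset D_i$. The strict transform $E_i$ of $D_i$ is, on each standard chart of the blowup, the zero locus of the appropriate transform of $x_i$: on the chart where we divide by $x_{j_0}$ (for $j_0\in J$) the divisor $D_i$ pulls back to $V(x_i)$ with $i\notin J$ still a coordinate, so the total transform and strict transform of $D_i$ coincide and are again given by $x_i=0$, while $E_0$ is given by $x_{j_0}=0$.

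The key point is then a direct set-theoretic computation. Suppose $C\subset Z\cap D_i$. A point $\tilde x\in\tilde C=f^{-1}(C)$ lies over some point $x'\in C\subset D_i$, so $x_i(x')=0$. Because $i\notin J$, the function $x_i$ is a coordinate untouched by the blowup, so its pullback $f^*x_i$ vanishes at $\tilde x$; hence $\tilde x$ lies in $V(f^*x_i)$, which is exactly $E_i$ (the strict transform, since $D_i$ does not contain the center). Therefore $\tilde C\subset E_i$. One should check this chart by chart, but on every chart the same elementary observation applies: the coordinate $x_i$ with $i\notin J$ is preserved by the blowup map, so $f^{-1}(V(x_i))=V(f^*x_i)=E_i$.

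I do not expect any real obstacle here: the whole content is the remark that $D_i$, not containing the center $Z$, has strict transform equal to total transform, and that this total transform is $f^{-1}(D_i)$; the hypothesis $C\subset Z\cap D_i$ then gives $\tilde C=f^{-1}(C)\subset f^{-1}(D_i)=E_i$ immediately. The only thing demanding a little care is making sure that ``strict transform'' is the right object, i.e. that no component of $E_0$ is erroneously included — but this is automatic precisely because $Z\subset D_i$ fails.
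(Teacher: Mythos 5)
Your argument is correct, but it takes a genuinely different route from the paper's. The paper never identifies $E_{i}$ with a preimage: it observes that $\tilde{C}\to C$ is a $\PP^{\sharp J-1}$-bundle (because $C\subset Z$ and $Z$ has codimension $\sharp J$), that $E_{i}$ is the blowup of $D_{i}$ along $Z\cap D_{i}$, which has codimension $\sharp J$ in $D_{i}$ precisely because $i\notin J$, so that $E_{i}\cap\tilde{C}\to C$ is again a $\PP^{\sharp J-1}$-bundle, and then concludes that the closed subset $E_{i}\cap\tilde{C}$ must be all of $\tilde{C}$ by comparing fibers. You instead prove the stronger set-theoretic statement $E_{i}=f^{-1}(D_{i})=V(f^{*}x_{i})$, from which the lemma is immediate since $C\subset D_{i}$. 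That identification is valid, but the one place where your write-up is a little glib is the assertion that $i\notin J$ is ``equivalently $Z\not\subset D_{i}$'': a function can lie in an ideal without being one of the chosen generators, so you are implicitly using that $x_{1},\dots,x_{d}$ is a regular system of parameters, whence $x_{i}$ for $i\notin J$ restricts to a nonzero element of the (reduced, regular) ring of $Z=V(x_{j}:j\in J)$; granting that, $\mathrm{ord}_{E_{0}}(f^{*}x_{i})=0$ and the total and strict transforms of $D_{i}$ coincide, exactly as you claim on each chart. Your version buys a cleaner and more robust statement (no dimension count, and it makes transparent why the hypothesis $i\in I\setminus J$ is needed), while the paper's version avoids coordinates and reuses the $\PP^{\sharp J-1}$-bundle description of $\tilde{C}$ that it exploits repeatedly in the subsequent case analysis.
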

\begin{proof}
The morphism $\tilde{C}\to C$ is a $\PP^{\sharp J-1}$-bundle. The
divisor $E_{i}$ is the blowup of $D_{i}$ along $Z\cap D_{i}$, which
has codimension $\sharp J$ in $D_{i}$. It follows that $E_{i}\cap\tilde{C}\to C$
is also a $\PP^{\sharp J-1}$-bundle. Hence $\tilde{C}$ and $E_{i}\cap\tilde{C}$
coincide and the lemma follows. 
\end{proof}

\section{The case $d'=1$. }

We now begin the proof of Proposition \ref{prop:main} in the situation
described just before Notation \ref{nota:in-prep-reduction}. In this
section, we consider the case $d'=1$.

Since $Z\subset\cX_{k}$, recalling $I=\{1,\dots,d'\}$, we see that
$1\in J$. Then
\[
\tilde{S}(\cX,C)=\begin{cases}
[C] & (a_{1}=1)\\
0 & (\text{otherwise})
\end{cases}.
\]
From (\ref{eq:a0}), $a_{0}=a_{1}$, and $(a_{0},a_{1})=a_{1}$. Hence,
if $a_{1}\ne1$, then
\[
\tilde{S}(\cY,\tilde{C})=0=\tilde{S}(\cX,C).
\]
If $a_{1}=1$, then recalling that $C\subset Z$, we see that $\tilde{C}\subset E_{0}=f^{-1}(Z)$
and that
\begin{align*}
\tilde{S}(\cY,\tilde{C}) & =[\tilde{C}\setminus E_{1}]+[E_{1}\cap\tilde{C}](1-\LL).
\end{align*}
To compute the right hand side of this equality, we first observe
that $\tilde{C}$ is a $\PP^{\sharp J-1}$-bundle over $C$. The divisor
$E_{1}$ is the blowup of $D_{1}$ along $Z$. Therefore $E_{1}\cap\tilde{C}$
is a $\PP^{\sharp J-2}$-bundle over $C$. Hence
\begin{align*}
\tilde{S}(\cY,\tilde{C}) & =[C]([\PP^{\sharp J-1}]-[\PP^{\sharp J-2}])+[C][\PP^{\sharp J-2}](1-\LL)\\
 & =[C]\left(\LL^{\sharp J-1}+(1+\LL+\cdots+\LL^{\sharp J-2})(1-\LL)\right)\\
 & =[C](\LL^{\sharp J-1}+1-\LL^{\sharp J-1})\\
 & =[C]\\
 & =\tilde{S}(\cX,C).
\end{align*}
We conclude that if $d'=1$, then $\tilde{S}(\cX,C)=\tilde{S}(\cY,\tilde{C})$. 

\section{The case $d'=2$. }

Next we consider the case $d'=2$. We have 
\[
C=(C\cap D_{1}^{\circ})\sqcup(C\cap D_{2}^{\circ})\sqcup(C\cap D_{12}^{\circ}).
\]
From the case $\sharp I=1$ treated in the last section, we have
\[
\tilde{S}(\cX,C\cap D_{i}^{\circ})=\tilde{S}(\cY,f^{-1}(C\cap D_{i}^{\circ}))\quad(i=1,2).
\]
Therefore, from Lemma \ref{lem:reduce-local}, replacing $C$ with
$C\cap D_{12}^{\circ}$, we may suppose that 
\begin{equation}
C\subset D_{12}^{\circ}=D_{1}\cap D_{2}.\label{eq:C sub D12}
\end{equation}
Then we have
\[
\tilde{S}(\cX,C)=\begin{cases}
\frac{1}{a_{1}a_{2}}[C](1-\LL) & ((a_{1},a_{2})=1)\\
0 & (\text{otherwise})
\end{cases}.
\]

We next compute $\tilde{S}(\cY,\tilde{C})$ separately in the case
$Z\subset D_{1}\cap D_{2}$ and in the case $Z\not\subset D_{1}\cap D_{2}$. 

In the former case, we have $a_{0}=a_{1}+a_{2}\ne1$ and 
\[
\tilde{S}(\cY,\tilde{C})=\sum_{\substack{i\in\{1,2\}:\\
(a_{0},a_{i})=1
}
}\frac{1}{a_{0}a_{i}}[\tilde{C}\cap E_{0i}^{\circ}](1-\LL).
\]
If $(a_{1},a_{2})\ne1$, then $(a_{0},a_{1})\ne1$ and $(a_{0},a_{2})\ne1$,
which show $\tilde{S}(\cY,\tilde{C})=0=\tilde{S}(\cX,C)$. If $(a_{1},a_{2})=1$,
then we have $(a_{0},a_{1})=(a_{0},a_{2})=1$, and
\begin{align*}
\tilde{S}(\cY,\tilde{C}) & =\sum_{i=1}^{2}\frac{1}{a_{0}a_{i}}[\tilde{C}\cap E_{0i}^{\circ}](1-\LL).
\end{align*}
Since $E_{1}\cap\tilde{C}=E_{0}\cap E_{1}\cap\tilde{C}\to C$ is a
trivial $\PP^{\sharp J-2}$-bundle and $E_{1}\cap E_{2}\cap\tilde{C}\to C$
is a hyperplane in it, $E_{01}^{\circ}\cap\tilde{C}\to C$ is a trivial
$\AA^{\sharp J-2}$-bundle. (Note that if $\sharp J=2$, then $E_{1}\cap E_{2}=\emptyset$
and $E_{1}\cap\tilde{C}=E_{01}^{\circ}\cap\tilde{C}\to C$ is an isomorphism
and still a trivial $\AA^{\sharp J-2}$-bundle.) Similarly for $E_{02}^{\circ}\cap\tilde{C}\to C$.
Hence
\begin{align*}
\tilde{S}(\cY,\tilde{C}) & =\left(\frac{1}{(a_{1}+a_{2})a_{1}}+\frac{1}{(a_{1}+a_{2})a_{2}}\right)[C]\LL^{\sharp J-2}(1-\LL)\\
 & =\frac{1}{a_{1}a_{2}}[C]\LL^{\sharp J-2}(1-\LL)\\
 & \stackrel{\bigstar}{=}\frac{1}{a_{1}a_{2}}[C](1-\LL)\\
 & =\tilde{S}(\cX,C).
\end{align*}
Here the equality marked with $\bigstar$ follows from
\begin{align*}
\LL(1-\LL) & =(\LL-1)(1-\LL)+1-\LL=1-\LL\mod(\LL-1)^{2}.
\end{align*}

In the case $Z\not\subset D_{1}\cap D_{2}$, we have either $Z\subset D_{1}$
or $Z\subset D_{2}$. Since the two cases are similar, we only discuss
the former case. Since $2\in I\setminus J$, from assumptions (\ref{eq:C sub Z})
and (\ref{eq:C sub D12}) and Lemma \ref{lem:I - J}, we have $\tilde{C}\subset E_{0}\cap E_{2}$.
Since $a_{0}=a_{1}$, $\tilde{C}\to C$ is a $\PP^{\sharp J-1}$-bundle
and $\tilde{C}\cap E_{1}\to C$ is a $\PP^{\sharp J-2}$-bundle, we
have

\begin{align*}
\tilde{S}(\cY,\tilde{C}) & =\frac{1}{a_{0}a_{2}}[\tilde{C}\cap E_{0,2}^{\circ}](1-\LL)\\
 & =\frac{1}{a_{1}a_{2}}[\tilde{C}\setminus E_{1}](1-\LL)\\
 & =\frac{1}{a_{1}a_{2}}[C][\PP^{\sharp J-1}\setminus\PP^{\sharp J-2}](1-\LL)\\
 & =\frac{1}{a_{1}a_{2}}[C]\LL^{\sharp J-1}(1-\LL)\\
 & =\frac{1}{a_{1}a_{2}}[C](1-\LL)\\
 & =\tilde{S}(\cX,C).
\end{align*}
We have completed the proof that $\tilde{S}(\cY,\tilde{C})=\tilde{S}(\cX,C),$
when $d'=2$. 

\section{The case $d'\ge3$. }

As in the last section, by induction on $\sharp I$, we may suppose
that 
\begin{equation}
C\subset\bigcap_{i\in I}D_{i}.\label{eq:C sub DI}
\end{equation}
Then $\tilde{S}(\cX,C)=0$. On the other hand, $\tilde{S}(\cY,\tilde{C})$
is a $\QQ$-linear combination of 
\[
A_{i}:=\left[\tilde{C}\cap E_{0i}^{\circ}\right](1-\LL),\,i\in I,
\]
and 
\[
B:=\delta_{1,a_{0}}\left[\tilde{C}\cap E_{0}^{\circ}\right],
\]
with $\delta_{1,a_{0}}$ being the Kronecker delta. Thus it suffices
to show that $A_{i}=0$, $i\in I$ and that $B=0$. 

We first show that $B=0$. If $\sharp(I\cap J)\ge2$, then 
\[
a_{0}=\sum_{i\in I\cap J}a_{i}>1.
\]
Hence $B=0$. If $\sharp(I\cap J)<2$, then $I\setminus J$ is non-empty.
Assumptions (\ref{eq:C sub Z}) and (\ref{eq:C sub DI}) and Lemma
\ref{lem:I - J} show that $\tilde{C}\cap E_{0}^{\circ}$ is empty,
hence $B=0$. 

Next we show that $A_{i}=0$. If $\sharp(I\setminus J)\ge2$, then
from Lemma \ref{lem:I - J}, for every $i\in I$, there exists $i'\in I\setminus\{i\}$
such that $\tilde{C}\subset E_{i'}$. Hence $\tilde{C}\cap E_{0i}^{\circ}=\emptyset$
and $A_{i}=0$. 

If $\sharp(I\setminus J)=1$, then by the same reasoning as above,
$A_{i}=0$ for $i\in I\cap J$. For $i\in I\setminus J$, 
\[
\tilde{C}\cap E_{0i}^{\circ}=\PP_{C}^{\sharp J-1}\setminus\bigcup_{j\in I\cap J}H_{j},
\]
where $\PP_{C}^{\sharp J-1}$ denotes the trivial $\PP^{\sharp J-1}$-bundle
$\PP^{\sharp J-1}\times C$ over $C$ and $H_{j}$ are coordinate
hyperplanes of $\PP_{C}^{\sharp J-1}$. Since $\sharp(I\cap J)\ge2$,
\[
A_{i}=[C][\GG_{m}^{\sharp(I\cap J)-1}\times\AA^{\sharp J-\sharp(I\cap J)}](1-\LL)=-[C]\LL^{\sharp J-\sharp(I\cap J)}(\LL-1)^{\sharp(I\cap J)}=0\mod(\LL-1)^{2}.
\]

If $\sharp(I\setminus J)=0$, equivalently if $Z\subset D_{i}$ for
every $i\in I$, then for every $i\in I$,
\[
\tilde{C}\cap E_{0i}^{\circ}=\PP_{C}^{\sharp J-2}\setminus\bigcup_{j\in I\setminus\{i\}}H_{j},
\]
where $H_{j}$ are coordinate hyperplanes of $\PP_{C}^{\sharp J-2}$.
We have
\[
A_{i}=[C][\GG_{m}^{\sharp I-2}\times\AA^{\sharp J-\sharp I}](1-\LL)=-[C]\LL^{\sharp J-\sharp I}(\LL-1)^{\sharp I-1}=0\mod(\LL-1)^{2}.
\]
We thus have proved that $\tilde{S}(\cX,C)=\tilde{S}(\cY,\tilde{C})=0$
also when $d'\ge3$, which completes the proofs of Proposition \ref{prop:main}
and Theorem \ref{thm:main}. 

\section{Closing comments}

It is natural to try to refine $\tilde{S}(X)$ further by lifting
it to $K_{0}(\Var_{k})_{\QQ}/(\LL-1)^{n}$ for $n>2$ and by adding
extra terms of the form
\[
c[D_{H}^{\circ}](1-\LL)^{\sharp H-1}
\]
with $c\in\QQ$, $H\subset I$, $\sharp H\ge3$. However the author
did not manage to find such a refinement. 

The original invariant considered by Serre \cite{MR0179170} and denoted
by $i(X)$ was defined for a $K$-analytic manifold when the residue
field $k$ is finite, and lives in $\ZZ/(\sharp k-1)$. There seems
to be no counterpart of $\tilde{S}(X)$ in this context, at least
in a naive way, because $\ZZ\otimes_{\ZZ}\QQ=\QQ$ is a field and
the ideal generated by $(\sharp k-1)^{2}$ in it is the entire field. 

The author has no convincing explanation of the meaning of fractional
coefficients appearing in the definition of $\tilde{S}(X)$. However,
as a possibly related work, we note that also Denef and Loeser \cite{MR1815218}
previously considered motivic invariants with coefficients in $\QQ$.

Nicaise and Sebag \cite[Th. 5.4]{MR2285749} gave a nice interpretation
of the Euler characteristic representation of $S(X)$ in terms of
cohomology of the generic fiber (see also \cite{MR3346173} for another
proof). It would be interesting to look for a similar interpretation
of representations of $\tilde{S}(X)$ or $\tilde{S}(X)$ itself. 

\bibliographystyle{plain}
\bibliography{../mybib}

\begin{thebibliography}{10}

\bibitem{MR1896232}
Dan Abramovich, Kalle Karu, Kenji Matsuki, and Jaros{\l}aw W{\l}odarczyk.
\newblock Torification and factorization of birational maps.
\newblock {\em J. Amer. Math. Soc.}, 15(3):531--572 (electronic), 2002.

\bibitem{Abramovich:2016rr}
Dan Abramovich and Michael Temkin.
\newblock Functorial factorization of birational maps for qe schemes in
  characteristic 0.
\newblock arXiv:1606.08414.

\bibitem{MR1045822}
Siegfried Bosch, Werner L{{\"u}}tkebohmert, and Michel Raynaud.
\newblock {\em N{\'e}ron models}, volume~21 of {\em Ergebnisse der Mathematik
  und ihrer Grenzgebiete (3) [Results in Mathematics and Related Areas (3)]}.
\newblock Springer-Verlag, Berlin, 1990.

\bibitem{MR1815218}
Jan Denef and Fran{\c{c}}ois Loeser.
\newblock Definable sets, motives and {$p$}-adic integrals.
\newblock {\em J. Amer. Math. Soc.}, 14(2):429--469 (electronic), 2001.

\bibitem{MR0463157}
Robin Hartshorne.
\newblock {\em Algebraic geometry}.
\newblock Springer-Verlag, New York, 1977.
\newblock Graduate Texts in Mathematics, No. 52.

\bibitem{MR3346173}
Ehud Hrushovski and Fran{\c{c}}ois Loeser.
\newblock Monodromy and the {L}efschetz fixed point formula.
\newblock {\em Ann. Sci. {\'E}c. Norm. Sup{\'e}r. (4)}, 48(2):313--349, 2015.

\bibitem{MR0276239}
Joseph Lipman.
\newblock Rational singularities, with applications to algebraic surfaces and
  unique factorization.
\newblock {\em Inst. Hautes {\'E}tudes Sci. Publ. Math.}, (36):195--279, 1969.

\bibitem{MR0491722}
Joseph Lipman.
\newblock Desingularization of two-dimensional schemes.
\newblock {\em Ann. Math. (2)}, 107(1):151--207, 1978.

\bibitem{MR1997948}
Fran{\c{c}}ois Loeser and Julien Sebag.
\newblock Motivic integration on smooth rigid varieties and invariants of
  degenerations.
\newblock {\em Duke Math. J.}, 119(2):315--344, 2003.

\bibitem{MR2285749}
Johannes Nicaise and Julien Sebag.
\newblock Motivic {S}erre invariants, ramification, and the analytic {M}ilnor
  fiber.
\newblock {\em Invent. Math.}, 168(1):133--173, 2007.

\bibitem{MR0179170}
Jean-Pierre Serre.
\newblock Classification des vari{\'e}t{\'e}s analytiques {$p$}-adiques
  compactes.
\newblock {\em Topology}, 3:409--412, 1965.

\bibitem{MR2435647}
Michael Temkin.
\newblock Desingularization of quasi-excellent schemes in characteristic zero.
\newblock {\em Adv. Math.}, 219(2):488--522, 2008.

\bibitem{MR2957701}
Michael Temkin.
\newblock Functorial desingularization of quasi-excellent schemes in
  characteristic zero: the nonembedded case.
\newblock {\em Duke Math. J.}, 161(11):2207--2254, 2012.

\end{thebibliography}

\end{document}